\documentclass[12pt,A4paper]{amsart}
\usepackage{amsmath,amsthm,amssymb,color}
\usepackage[bookmarksnumbered,colorlinks]{hyperref}
\hypersetup{colorlinks=true,linkcolor=red,anchorcolor=green,citecolor=cyan}

\textheight 23.5truecm
\textwidth 15.5truecm
\setlength{\oddsidemargin}{0.05in}\setlength{\evensidemargin}{0.05in}
\setlength{\topmargin}{-.5cm}

\newtheorem{theorem}{Theorem}[section]
\newtheorem{lemma}[theorem]{Lemma}
\numberwithin{equation}{section}
\newcommand{\N}{\mathbb{N}}

\newcommand{\C}{\mathbb{C}}

\begin{document}

\title[\tiny Hypercyclic subspaces for sequences of finite order differential operators]{Hypercyclic subspaces for sequences of finite order differential operators}

\author[Bernal]{L.~Bernal-Gonz\'alez}
\address[L. Bernal-Gonz\'alez]{\mbox{}\newline \indent Departamento de An\'alisis Matem\'atico \newline \indent Facultad de Matem\'aticas
\newline \indent Instituto de Matem\'aticas de la Universidad de Sevilla (IMUS)
\newline \indent Universidad de Sevilla
\newline \indent Avenida Reina Mercedes s/n, 41012-Sevilla (Spain).}
\email{lbernal@us.es}

\author[Calder\'on]{M.C.~Calder\'on-Moreno}
\address[M.C.~Calder\'on-Moreno]{\mbox{}\newline \indent Departamento de An\'alisis Matem\'atico \newline \indent Facultad de Matem\'aticas
\newline \indent Instituto de Matem\'aticas de la Universidad de Sevilla (IMUS)
\newline \indent Universidad de Sevilla
\newline \indent Avenida Reina Mercedes s/n, 41012-Sevilla (Spain).}
\email{mccm@us.es}

\author[L\'opez-Salazar]{J.~L\'opez-Salazar}
\address[J.~L\'opez-Salazar]{\mbox{}\newline \indent Departamento de Matem\'atica Aplicada a las Tecnolog\'ias  \newline \indent de la Informaci\'on y de las Comunicaciones
\newline \indent Escuela T\'ecnica Superior de Ingenier\'ia y Sistemas de Telecomunicaci\'on
\newline \indent Universidad Polit\'ecnica de Madrid
\newline \indent Nikola Tesla s/n, 28031-Madrid (Spain).}
\email{jeronimo.lopezsalazar@upm.es}

\author[Prado]{J.A.~Prado-Bassas}
\address[J.A.~Prado-Bassas]{\mbox{}\newline \indent Departamento de An\'alisis Matem\'atico
\newline \indent Facultad de Matem\'aticas
\newline \indent Instituto de Matem\'aticas de la Universidad de Sevilla (IMUS)
\newline \indent Universidad de Sevilla
\newline \indent Avenida Reina Mercedes s/n, 41012-Sevilla (Spain).}
\email{bassas@us.es}

\subjclass[2020]{15A03, 30K15, 46B87, 47A16, 47B91}

\keywords{Differential operator of finite order, hypercyclic sequence of operators, maximal dense lineability, spaceability, pointwise lineability}

\begin{abstract}
It is proved that, if $(P_n)$ is a sequence of polynomials with complex coefficients having unbounded valences and tending to infinity at sufficiently many points, then there is an infinite dimensional closed subspace of entire functions, as well a dense $\mathfrak{c}$-dimensional subspace of entire functions, all of whose nonzero members are hypercyclic for the corresponding sequence $(P_n(D))$ of differential operators. In both cases, the subspace can be chosen so as to contain any prescribed hypercylic function.
\end{abstract}

\maketitle

\section{Introduction.}

Hypercyclicity and lineability are two subjects in functional analysis that have been thoroughly investigated for the last three decades. Hypercyclicity deals with the search for vectors whose orbits under an operator or sequence of operators is dense in the supporting space, while the goal of lineability is to find linear structures inside nonlinear subsets of a vector space. The connection between both theories starts with a result due to Herrero \cite{Her} in the Hilbert space setting asserting that, if the sets of vectors having dense orbit with respect an operator is nonempty, then the family of these vectors contains, except for zero, a dense vector subspace (in fact, such a subspace is invariant under the operator). The result was extended by Bourdon \cite{Bou} and B\`es \cite{Bes} to the complex locally convex case and the real locally convex case, respectively. Finally, Wengenroth \cite{Wen} in 2003 was able to encompass the general case of any topological vector space. A number of analogous assertions for {\it sequences} of operators have been also found (see below).

\vskip 3pt

Let us proceed by fixing some terminology. Let $X$ and $Y$ be two linear topological spaces (in many cases, $X=Y$) and $T_n:X\to Y$ be an operator (that is, a continuous linear mapping) for each $n\in\N$. An element $x_0\in X$ is said to be {\it hypercyclic} or {\it universal} for the sequence $(T_n)$ whenever its orbit $\left\{T_n(x_0): n\in\N\right\}$ under $(T_n)$ is dense in $Y$. The family $(T_n)$ is called {\it hypercyclic} whenever it has a hypercyclic vector. It is plain that, in order that a sequence $(T_n)$ can be hypercyclic, the space $Y$ must be separable. If $T:X\to X$ is an operator, then a vector $x_0\in X$ is said to be {\it hypercyclic} for $T$ provided that it is hypercyclic for the sequence $(T^n)$ of iterates of $T$, i.e., $T^n=T\circ T\circ\cdots\circ T$ ($n$-fold). The operator $T$ is {\it hypercyclic} when there is a hypercyclic vector for $T$. The symbols $HC(T)$ and $HC((T_n))$ will denote, respectively, the set of hypercyclic vectors of an operator $T$ and of a sequence $(T_n)$ of operators. For background on hypercyclity the reader is referred to the survey \cite{Gr1} and the books \cite{bayartM,grosseP}.

\vskip 3pt

Before going on, let us also introduce a number of notions coming from the modern theory of lineability. A subset $A$ of a vector space $X$ is said to be {\it lineable} whenever there is an infinite dimensional vector subspace \,$M\subset X$ such that $M\subset A\cup\{0\}$. If \,$X$ is, in addition, a topological vector space, then \,$A$ is said to be {\it spaceable} if there is a closed infinite-dimensional vector subspace \,$M \subset X$ \,such that \, $M\subset A\cup\{0\}$, while \,$A$ \,is {\it dense-lineable} if there is a dense vector subspace \,$M\subset X$ \,such that \,$M\subset A\cup\{0\}$. Finally, $A$ is called {\it maximal dense-lineable} if there is a dense vector subspace \,$M \subset X$ \,such that \,$\dim(M)=\dim(X)$ \,and \,$M\subset A\cup\{0\}$. For concepts and results on lineability, the reader is referred to the book \cite{ABPS}. The more recent, finer, notion of pointwise lineability will be recalled in Section \ref{SeccionPuntual} below.

\vskip 3pt

Note that under the preceding terminology, the assertion by Bourdon, B\`es, and Wengen\-roth mentioned in the first paragraph tells us that \,$HC(T)$ \,is dense-lineable provided that \,$T$ \,is a hypercyclic operator on \,$X$. If \,$X$ \,is a Banach space, then \,$HC(T)$ \,is even maximal dense-lineable (see \cite{Be5}). It is easy to provide examples showing that these properties are no longer true for hypercyclic {\it sequences} \,$(T_n)$ \,of operators. Nevertheless, some criteria have been given in order that \,$HC((T_n))$ \,can be dense-lineable (see \cite{Be4,BeCa}). But the approach in \cite{Be4,BeCa} only yields dense subspaces having countable dimension and so it does not give maximal dense lineability, because the dimension of an infinite dimensional F-space is at least $\mathfrak{c}$, the cardinality of the continuum (see \cite{Popoola}).

\vskip 3pt

Turning our attention to spaceability, one can say that this is a more delicate question. In 1996, Montes \cite{montes1996} showed that if \,$B:\ell_2\to\ell_2$\, is the backward shift operator on the Hilbert space of square-summable sequences, then, despite the hypercyclicity of $2B$, the set $HC(2B)$ is not spaceable. Moreover, he provided a sufficient criterion for the spaceability of \,$HC(T)$ \,in the setting of Banach spaces. These results have been extended in many directions, so that currently a number of sufficient criteria for spaceability and non-spaceability of $HC(T)$ and $HC((T_n))$ are known, even in the case of Fr\'echet spaces (see \cite{BoMaPe,BoniGr,GonLM,grosseP,LeoM,LeoMu,Men,Petersson}).

\vskip 3pt

In this paper, we are specially interested in differential operators defined on the Fr\'echet space \,$H(\C)$ \,of all entire functions, endowed with the topology of uniform convergence on compacta. An entire function \,$\Phi$ \,is said to be of exponential type whenever there exist positive constants $A$ and $B$ such that $|\Phi(z)|\leq Ae^{B|z|}$ for all $z\in\C$. The class of these functions will be denoted by \,${\mathcal E}$. It is easy to realize (see, for instance, \cite{BeG,Dic}) that if \,$\Phi(z)=\sum_{n=0}^{\infty}a_n z^n$\, is an entire function with exponential type, then the series \,$\Phi(D)=\sum_{n=0}^{\infty}a_n D^n$ defines an operator on $H(\C)$ acting in the following way:
\begin{equation*}
  (\Phi(D)f)(z) = \sum_{n=0}^{\infty}a_n f^{(n)}(z)
\end{equation*}
for each \,$f\in H(\C)$\, and \,$z\in\C$. Hence, \,$\Phi (D)$\, defines, under the latter conditions, a (generally, infinite order) linear differential operator with constant coefficients.

Godefroy and Shapiro proved in \cite{GoS} the hypercyclicity of every non-scalar operator on \,$H(\C)$ \,commuting with the translation operator $\tau_a$ for every $a\in\C$, where $\tau_a f(z)=f(z+a)$ for each \,$f\in H(\C)$ and \,$z\in\C$. It was also shown in \cite{GoS} that an operator $L$ on $H(\C)$ commutes with every translation operator $\tau_a$ if and only if $L$ commutes with the derivative operator \,$D$, and if and only if $L=\Phi(D)$ for some entire function $\Phi\in\mathcal{E}$. In particular, by choosing \,$\Phi(z)=e^{az}$ \,and \,$\Phi(z)=z$, one recovers the hypercyclicity of each nontrivial translation operator and of the derivative operator, that had been respectively obtained by Birkhoff \cite{Bir} and MacLane \cite{Mac}. Petersson \cite{Petersson} proved in 2006 the spaceability of \,$HC(\Phi (D))$ \,if \,$\Phi$ \,is transcendental. The finite order case (that is, the polynomial case) turned out to be more refractory. In fact, it was not until 2010 that Shkarin \cite{shkarin} showed the spaceability of \,$HC(D)$. Finally, Menet \cite{Men} in 2014 was able to prove the spaceability of \,$HC(P(D))$ \,for any nonconstant polynomial $P$.

\vskip 3pt

The aim of this paper is to study lineability properties (specifically, spaceability, maximal dense lineability, and pointwise lineability) of the family of entire functions that are hypercyclic with respect to a sequence of differential operators generated by polynomials having unbounded valences. This will be carried out in Sections \ref{SeccionSpaceable} and \ref{SeccionPuntual}, while Section \ref{ResultadosPrevios} will be devoted to establish a number of auxiliary results on hypercyclicity and lineability.

\section{Preliminary results.}\label{ResultadosPrevios}

First of all, we present the following rather general spaceability criterion for \,$HC((T_n))$, that is due to Bonet, Mart\'inez-Gim\'enez and Peris \cite[Theorem 3.5]{BoMaPe} (see also \cite{Petersson}).

\begin{theorem} \label{Bonet-Martinez-Peris}
Let \,$T_n : X \to Y$ $(n \in  \N)$ \,be a sequence of continuous linear mappings between two separable Fréchet spaces such that \,$X$ \,supports a continuous norm. Suppose that there are respectively dense subsets \,$X_0\subset X$ \,and \,$Y_0\subset Y$, a closed infinite dimensional subspace \,$M_0\subset X$, \,and mappings \,$S_n:Y_0\to X$
\,$(n\in \N)$ \,satisfying the following properties:
\begin{enumerate}
  \item[(i)] $\lim_{n\to\infty}T_n x=0$\, for all \,$x\in X_0$.
  \item[(ii)] $\lim_{n\to\infty}S_n y=0$\, for all \,$y\in Y_0$.
  \item[(iii)] $\lim_{n\to\infty}T_n S_n y=y$\, for all \,$y\in Y_0$.
  \item[(iv)] $\lim_{n\to\infty}T_n x=0$\, for all \,$x\in M_0$.
\end{enumerate}
Then there exists a closed infinite dimensional subspace of \,$X$ \,all of whose nonzero members are hypercyclic vectors for the sequence \,$(T_n)$. In other words, the set \,$HC((T_n))$ \,is spaceable.
\end{theorem}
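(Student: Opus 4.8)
The starting point is to notice that hypotheses (i)--(iii) are exactly the hypotheses of the Hypercyclicity (Universality) Criterion for a sequence of operators, so by themselves they already force $HC((T_n))$ to be a dense $G_\delta$ subset of $X$; the genuine task is therefore to manufacture a \emph{closed} infinite dimensional subspace inside $HC((T_n))\cup\{0\}$, and the extra datum that makes this possible is (iv), namely a closed infinite dimensional subspace $M_0$ along which the operators decay. To set up the quantitative bookkeeping I would fix an increasing sequence of seminorms $(p_m)_{m\ge1}$ generating the topology of $X$ with $p_1=\|\cdot\|$ a continuous norm (this is where the continuous-norm hypothesis is indispensable), a sequence $(y_j)_{j\ge1}$ that is dense in $Y$ and in which every value is repeated infinitely often, and a normalized basic sequence $(b_k)_{k\ge1}$ inside $M_0$.

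Next I would build, by a single staircase induction, a basic sequence $(u_k)_{k\ge1}$ in $X$ realized as small perturbations of the $b_k$. Concretely, each $u_k$ has the form $u_k=b_k+w_k$, where the correction $w_k$ is an absolutely convergent series of vectors $S_{m}y_{j}$ with the times $m$ chosen enormous; condition (ii) lets me take every correction as small as I please in each seminorm $p_m$ and summable, which keeps $(u_k)$ a perturbation of $(b_k)$ so small that it remains basic and its closed linear span $M=\overline{\operatorname{span}}\{u_k:k\ge1\}$ stays infinite dimensional. The induction enumerates the pairs (vector index, approximation step) and, at each stage, introduces one new large time $m$: using (iii) I arrange $T_{m}u_k\approx y_{j}$ for the target $y_j$ currently assigned to $u_k$, while using (i) on elements of $X_0$ approximating the finitely many previously built vectors, together with (iv) applied to the $b_\ell$, I force $T_{m}$ to be negligible on everything already constructed and on the correction terms. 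The decisive point is that, because every $u_\ell$ is asymptotically carried by $M_0$, condition (iv) yields $T_n u_\ell\to0$; hence $T_n u_\ell$ is automatically tiny at all sufficiently large reserved times, and only finitely many reserved times remain to be controlled by hand at each finite stage.

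With the construction in place I would verify that every nonzero $x=\sum_k c_k u_k\in M$ is hypercyclic. Let $k_0$ be the least index with $c_{k_0}\ne0$. Evaluating at the times $m$ reserved for $u_{k_0}$ gives
\[
T_{m}x=c_{k_0}T_{m}u_{k_0}+\sum_{\ell>k_0}c_\ell\,T_{m}u_\ell\approx c_{k_0}\,y_{j},
\]
since $T_{m}u_{k_0}$ approximates the successive targets $y_{j}$ by construction, while each later term $T_{m}u_\ell$ ($\ell>k_0$) was forced to be negligible at precisely these times, with bounds independent of the coefficients; there are no terms with $\ell<k_0$. As $(y_j)$ is dense and each value recurs infinitely often, the vectors $c_{k_0}y_{j}$ run through a dense subset of $Y$, so the orbit of $x$ is dense and $x\in HC((T_n))$.

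The step I expect to be the real obstacle is the uniform, coefficient-free control required in the last paragraph: the times $m$ must be frozen during the construction, yet $T_{m}u_\ell$ has to be small, for every $\ell>k_0$ and at \emph{all} the infinitely many times reserved for every earlier vector, with estimates that do not see the (arbitrary) coefficients $c_\ell$. Reconciling the ``for all large $n$'' nature of (i), (ii) and (iv) with smallness at these specific, possibly very large, reserved times is exactly what the staircase induction is designed to do: at each finite stage only finitely many constraints are active, and the asymptotic decay guaranteed by (iv) disposes of all the large times at once. Keeping the perturbations summable in every seminorm, so that $(u_k)$ is provably basic and the coefficient functionals are continuous, is where the continuous norm $p_1$ and the closedness of $M_0$ do the essential work.
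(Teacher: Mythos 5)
The paper itself does not prove Theorem~\ref{Bonet-Martinez-Peris}: it quotes it verbatim from \cite[Theorem 3.5]{BoMaPe}. So the only meaningful comparison is with the standard Montes--Rodr\'iguez-type construction underlying that reference, and your sketch does follow its architecture: a basic sequence $(b_k)$ in $M_0$, perturbations $u_k=b_k+w_k$ with $w_k$ built from small vectors of the form $S_m y$, and a coefficient-by-coefficient verification. The skeleton is right.

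There is, however, a genuine gap at the decisive step. You assert that ``because every $u_\ell$ is asymptotically carried by $M_0$, condition (iv) yields $T_n u_\ell\to 0$,'' and you use this to dispose of the cross terms $T_m u_\ell$ $(\ell>k_0)$ at the times reserved for $u_{k_0}$. This assertion is false: each nonzero $u_\ell$ lies in $M$ and is meant to be hypercyclic --- indeed, by your own construction $T_m u_\ell\approx y_j$ along the times reserved for $u_\ell$ --- so $T_n u_\ell$ cannot tend to $0$. Condition (iv) controls only the $M_0$-part, i.e.\ $T_n\bigl(\sum_k c_k b_k\bigr)$ (a single fixed element of the closed subspace $M_0$); it says nothing about $T_n w_\ell$, since $w_\ell\notin M_0$. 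The smallness of $T_m w_\ell$ at reserved times $m$ not belonging to $\ell$ is precisely what must be engineered, and it splits into two cases requiring different tools: for the finitely many times fixed \emph{before} a perturbation term is introduced, continuity of those finitely many operators plus (ii) suffices; for times fixed \emph{afterwards}, one must be able to choose the new time so that $T_n$ is small on the already-fixed perturbation terms, which requires those terms to lie in a set where $T_n\to 0$ pointwise --- the standard device is to replace each $S_m y$ by a nearby element of $X_0$ (close enough that $T_m$ of it still approximates $y$, using only continuity of the single operator $T_m$) and then invoke (i). Your phrase about ``using (i) on elements of $X_0$ approximating the finitely many previously built vectors'' gestures at this but has the logic backwards: approximating an already-built $v$ by $v'\in X_0$ a posteriori controls $T_n v'$ but not $T_n(v-v')$, because the $T_n$ need not be equicontinuous; the perturbations must be taken inside $X_0$ from the outset. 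With that repair, and the usual summability bookkeeping of the errors against the growth of the coefficient functionals of the basic sequence, the argument closes.
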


Second, we recall a sufficient condition for the denseness in \,$H(\C)$ \,of a family of exponential functions. If $w\in\C$, we define $e_w(z)=e^{wz}$ for each $z\in\C$. Recall that $\mathcal{E}$ denotes the class of entire functions of exponential type. We say that a subset $U\subset\C$ is an {\it $\mathcal{E}$-unicity set} whenever the following property holds: if \,$f\in\mathcal{E}$\, and \,$f(z)=0$\, for all \,$z\in U$, then \,$f\equiv 0$. Note that, by the identity principle for holomorphic functions, if \,$f$\, is an arbitrary entire function vanishing on a nonempty open set \,$U$\, (or even just on a set \,$U$\, having some accumulation point in \,$\C$), then \,$f \equiv 0$. This is not necessary for the class \,$\mathcal{E}$; for instance, if \,$U\subset\C$, \,$n(r)$\, is the number of points of \,$U\cap\left\{z\in\C:|z|\leq r\right\}$, and \,$\chi=\limsup_{r\to\infty}\frac{\log n(r)}{\log r}>1$, then \,$U$\, is an \,${\mathcal E}$-unicity set (e.g., \,$U=\left\{n^{1/2}: n\in\N\right\}$, which gives \,$\chi=2$). Indeed, if \,$f\in\mathcal{E}$, \,$f(z)=0$\, for all $z\in U$, and \,$f\not\equiv 0$, the condition \,$\chi>1$\, would imply that the convergence exponent of the sequence of zeros of \,$f$\, is strictly greater than the growth order of \,$f$\,, which is clearly impossible (see, e.g., \cite[p.~15-17]{Boas}).

\vskip 3pt

The next lemma, whose proof is classical, can be found in \cite{bernalprado} (see also \cite{GoS}).

\begin{lemma}
If \,$U$ is an $\mathcal{E}$-unicity set, then the linear span of \,$\{e_w:\,w\in U\}$ \,is dense in \,$H(\C)$.
\end{lemma}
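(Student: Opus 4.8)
The plan is to prove that the linear span of $\{e_w : w \in U\}$ is dense in $H(\C)$ by duality, using the Hahn--Banach theorem. Since $H(\C)$ is a locally convex space, a subspace is dense if and only if every continuous linear functional that annihilates it must be the zero functional. So I would take an arbitrary $L \in H(\C)^*$ with $L(e_w) = 0$ for all $w \in U$ and aim to conclude that $L \equiv 0$. The key tool is the classical description of the dual of $H(\C)$ via the Fourier--Borel transform: to each continuous linear functional $L$ on $H(\C)$ one associates the entire function $\Phi_L(w) := L(e_w)$, and a standard estimate (using that $L$ is bounded on some compact disk $\{|z| \le R\}$) shows that $|\Phi_L(w)| \le C \, e^{R|w|}$, so that $\Phi_L \in \mathcal{E}$.

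**Next I would combine** these two observations. The hypothesis $L(e_w) = 0$ for all $w \in U$ translates precisely into $\Phi_L(w) = 0$ for all $w \in U$. Since $\Phi_L$ is an entire function of exponential type and $U$ is an $\mathcal{E}$-unicity set, the defining property of $\mathcal{E}$-unicity sets forces $\Phi_L \equiv 0$, i.e.\ $L(e_w) = 0$ for \emph{every} $w \in \C$, not merely for $w \in U$.

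**Finally I would upgrade** from vanishing on all exponentials to vanishing identically. One way is to observe that if $\Phi_L \equiv 0$ then $L$ kills every monomial: writing $e_w(z) = \sum_{k=0}^{\infty} w^k z^k / k!$ and differentiating $\Phi_L(w) = L(e_w)$ repeatedly in $w$ at $w = 0$ (justified because $L$ is continuous and the exponential series converges in $H(\C)$, so one may interchange $L$ with the limit), one gets $L(z^k) = \Phi_L^{(k)}(0) = 0$ for all $k \ge 0$. Since the polynomials are dense in $H(\C)$ and $L$ is continuous, this yields $L \equiv 0$, completing the argument.

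**The main obstacle** I expect is the interchange of the functional $L$ with the infinite series, both in deriving the exponential-type bound on $\Phi_L$ and in extracting the values $L(z^k)$ from the Taylor coefficients of $\Phi_L$; this rests on the continuity of $L$ together with the convergence of the exponential series uniformly on compacta, which is where the hypothesis that $H(\C)$ carries the compact-open topology enters. Everything else is a routine application of Hahn--Banach and the $\mathcal{E}$-unicity hypothesis, which is exactly the classical reason the proof is described as standard.
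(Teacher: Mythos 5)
Your proposal is correct and is precisely the classical duality argument (Hahn--Banach plus the Fourier--Borel transform $L\mapsto\Phi_L$, $\Phi_L(w)=L(e_w)$, which lies in $\mathcal{E}$ and vanishes on the $\mathcal{E}$-unicity set $U$) that the paper invokes without reproducing, referring instead to the cited sources. All the steps you flag as needing justification, in particular the interchange of $L$ with the exponential series and the estimate $|\Phi_L(w)|\leq Ce^{R|w|}$ coming from continuity of $L$ in the compact-open topology, go through exactly as you describe.
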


The following assertion, whose proof can be extracted from \cite[Section 7.3]{ABPS}, will be invoked to prove the maximal dense-lineability of the family of hypercyclic entire functions with respect to a sequence of differential operators:

\begin{theorem}\label{A stronger than B}
Assume that \,$X$ is a metrizable separable topological vector space and that \,$A$ \,and \, $B$ \,are subsets of \,$X$ fulfilling the following properties:
\begin{enumerate}
  \item[\rm (i)] $A+B\subset A$.
  \item[\rm (ii)] $A\cap B=\varnothing$.
  \item[\rm (iii)] $A \, \cup \,\{0\}$ \,contains an infinite dimensional vector subspace \,$M$ \,such that \,$\dim(M)=\dim(X)$.
  \item[\rm (iv)] $B$ is dense-lineable.
\end{enumerate}
Then \,$A$ \,is maximal dense-lineable.
\end{theorem}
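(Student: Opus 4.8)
The plan is to produce \emph{directly} a dense subspace $N\subset A\cup\{0\}$ with $\dim N=\dim X$, rather than to invoke any black box. Fix the ingredients handed over by the hypotheses: a Hamel basis $\{m_i:i\in I\}$ of the subspace $M$ from (iii), so that $|I|=\dim M=\dim X=:\kappa$ and every nonzero element of $M$ lies in $A$; and a dense subspace $V\subset X$ furnished by (iv), all of whose nonzero elements lie in $B$. Since $X$ is metrizable I would fix a translation-invariant metric $\rho$ inducing its topology, and since $X$ is separable I would extract from $V$ a sequence $(d_k)_{k\in\N}$ of \emph{nonzero} vectors which is not merely dense but has every tail $\{d_k:k\ge q\}$ dense in $X$ (take a countable dense subset of $V$, discard $0$, and interleave it with itself infinitely often).

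Next I would couple the two families. Split $I=\{i_k:k\in\N\}\sqcup I'$ into a countable set of distinct indices and a remainder with $|I'|=\kappa$, which is possible because $\kappa\ge\aleph_0$. Using continuity of scalar multiplication, rescale each of the countably many basis vectors $m_{i_k}$ — which only replaces the Hamel basis of $M$ by another one and leaves the set $M$ (hence $M\setminus\{0\}\subset A$) untouched — so that $\rho(m_{i_k},0)<1/k$; thus $m_{i_k}\to0$. Now set $g_k=m_{i_k}+d_k$ and define $N=\operatorname{span}\bigl(\{g_k:k\in\N\}\cup\{m_i:i\in I'\}\bigr)$.

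The verification splits into three independent points. For \emph{density}: given a target $t$ and $\varepsilon>0$, pick $p$ with $1/p<\varepsilon/2$ and, by tail-density, $k\ge p$ with $\rho(d_k,t)<\varepsilon/2$; translation invariance gives $\rho(g_k,t)\le\rho(m_{i_k},0)+\rho(d_k,t)<\varepsilon$, so the $g_k$ are already dense and hence $N$ is dense. For the \emph{dimension}: $N$ contains the linearly independent family $\{m_i:i\in I'\}$, so $\kappa=|I'|\le\dim N\le\dim X=\kappa$. The crucial point is that $N\setminus\{0\}\subset A$: write a generic nonzero $v\in N$ as $v=m+d$ with $m=\sum_{k}c_k m_{i_k}+\sum_{i\in I'}e_i m_i\in M$ and $d=\sum_k c_k d_k\in V$, observing that the coefficient $c_k$ multiplying $d_k$ is the same one multiplying $m_{i_k}$. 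If $d=0$ then $v=m\in M\setminus\{0\}\subset A$. If $d\neq0$, then necessarily $m\neq0$, for $m=0$ would force, by the linear independence of the distinct basis vectors involved, every $c_k=0$ and hence $d=0$, a contradiction; therefore $m\in A$, $d\in B$, and (i) yields $v=m+d\in A+B\subset A$. Here hypothesis (ii) is what keeps the two pieces transverse, since it guarantees $M\cap V=\{0\}$.

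I expect the genuine obstacle to be \emph{density}, not the algebra. A naive pairing $g_k=m_{i_k}+d_k$ seems to destroy access to the dense vectors $d_k$, because neither $m_{i_k}$ nor $d_k$ individually lies in $N$, so one cannot simply recover $V$ inside $N$ (and one must not, since $N$ has to avoid $B\setminus\{0\}$ entirely). The device that breaks this deadlock is to shrink the countably many basis vectors $m_{i_k}$ down toward $0$ and to work with a sequence $(d_k)$ whose every tail is dense, so that $g_k$ itself approximates every point of $X$ while the resulting subspace still lands, apart from the origin, inside $A$.
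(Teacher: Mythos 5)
Your construction is correct, and it is essentially the standard argument behind this criterion: the paper does not reproduce a proof but refers to \cite[Section 7.3]{ABPS}, where the same device is used, namely pairing a countable dense sequence taken from the dense subspace inside $B\cup\{0\}$ with countably many basis vectors of $M$ rescaled toward $0$, spanning these sums together with the remaining $\dim(X)$ basis vectors of $M$, and using $A+B\subset A$ to keep all nonzero combinations inside $A$. All the steps you flag as delicate (tail-density of $(d_k)$, translation invariance of the metric, the coefficient-matching argument showing $m\neq 0$ whenever $d\neq 0$) are handled correctly, so nothing further is needed.
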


\section{Fr\'echet spaces of hypercyclic functions.}\label{SeccionSpaceable}

Recall that, for a nonconstant polynomial with complex coefficients \,$P(z)=\sum_{j=m}^{d}c_j z^j$, with \,$m\leq d$ \,and \,$c_m\neq 0\neq c_d$, its valence or multiplicity at the origin is \,$m\in\N\cup\{0\}$, while its degree is \,$d\in\N$.

\vskip 3pt

In this section we are going to consider the following three properties that may or may not be satisfied by a sequence \,$(P_n)$ \,of polynomials:

\vskip 3pt

\begin{enumerate}
  \item[(P)] There is an $\mathcal{E}$-unicity set \,$U\subset\C$ \,such that \,$\lim_{n\to\infty}|P_n(z)|=+\infty$ \,for all \,$z\in U$.
  \item[(Q)] If \,$P_n(z)=\sum_{j=m(n)}^{d(n)}c_{j,n} z^j$, then for each \,$k\in\N$, it holds that
      \begin{equation*}
        \lim_{n \to \infty} m(n)|c_{m(n),n}|^{k/m(n)} = + \infty
      \end{equation*}
      and the sequence $\{c_{k+m(n),n}: \, n\in\N\}$ \,is bounded, where it is understood that \,$c_{j,n} = 0$ \,if \,$j > d(n)$.
  \item[(R)] There is \,$r>0$ \,such that \,$\lim_{n\to\infty}\min\{|P_n(z)| : \, |z|=r\}=+\infty$.
\end{enumerate}

\vskip 3pt

It should be said that properties (P) and (Q) were dealt with in \cite{bernalprado} in order to obtain residuality for \,$HC((\Phi_n(D)))$, but no lineability feature was aimed therein.

\vskip 3pt

Let us analyze the connections among all these properties.

\vskip 3pt

\begin{enumerate}
\item[$\bullet$] (R) implies (P): simply take \,$U=\{z\in\C : |z|=r\}$, which is an $\mathcal{E}$-unicity set.

\item[$\bullet$] (R) does not imply (Q), and, consequently, (P) does not imply (Q) either. For this, let \,$P_n(z)=\frac{z^n}{n^n}+z^{n+1}$ for each $n\geq 1$. On the one hand, given any \,$r>1$, we have
    \begin{equation*}
      \min \{ |P_n (z)| : \, |z|=r\} \geq r^n\cdot\left(r-\frac{1}{n^n}\right) \to +\infty
      \quad \text{as } n\to\infty,
    \end{equation*}
    which entails (R). On the other hand, we have \,$m(n)=n$ \,and \,$c_{m(n),n}=n^{-n}$.
    Hence, \,$\lim_{n\to\infty}m(n)|c_{m(n),n}|^{k/m(n)}=0$ \,for all \,$k\geq 2$, which tells us that (Q) is not satisfied.

\item[$\bullet$] (Q) does not imply (P), and so (Q) does not imply (R) either. Indeed, if $c_n=n^{-n/\log(n+1)}$ \, and \, $P_n(z)=c_nz^n(1 + z)$ \, for each $n\in\N$, then the sequence of polynomials $(P_n)$ \, satisfies (Q), because \,$m(n)=n$, $(c_n)$ is bounded and, for every $k\in\N$, we have
    \begin{equation*}
      \lim_{n\to\infty}m(n)|c_{m(n),n}|^{k/m(n)}
      = \lim_{n\to\infty}n^{1-\frac{k}{\log(n+1)}} = +\infty.
    \end{equation*}
    However, $(P_n)$ does not fulfill (P), because \,$\lim_{n\to\infty}c_n r^n =0$ \,for all \,$r>0$, and so \,$\lim_{n\to\infty}P_n(z)=0$ \,for all \,$z\in\C$.

\item[$\bullet$] (P) does not imply (R). In order to see this, we consider an enumeration \,$(q_n)$ \, of the positive rational numbers and define \,$P_n(z)=z^n(z-q_n)^n$ for each $n\geq 1$. The interval \,$U=(-\infty,-1)$ is an $\mathcal{E}$-unicity set. For every \,$x>1$\, and \,$z=-x\in U$, we have
    \begin{equation*}
      |P_n(z)| = x^n \cdot (x+q_n)^n > x^n \to +\infty   \quad \text{as } n\to\infty.
    \end{equation*}
    Therefore, the sequence $(P_n)$ \,satisfies (P). On the other hand, fixed any \,$r>0$, there exists a subsequence \,$(q_{n_k})$ \, such that \,$0<r-q_{n_k}<1/(2r)$ \,for all \,$k\in\N$. Then
    \begin{equation*}
      |P_{n_k}(r)| = r^{n_k} \cdot (r-q_{n_k})^{n_k} < \frac{1}{2^{n_k}} \to 0
    \quad \text{as } k\to\infty
    \end{equation*}
    and so (R) does not hold.

\item[$\bullet$] There are sequences of polynomials \,$(P_n)$ \,satisfying all three properties (P), (Q), and (R): simply take \,$P_n(z)=c_n z^n$, where \,$(c_n)$ \,is a sequence of complex numbers such that \,$\liminf_{n\to\infty}|c_n|^{1/n}>0$. Contrary to this, none of the properties (P), (Q), or (R) is fulfilled if \,$\lim_{n\to\infty}|c_n|^{1/n^2}=0$.
\end{enumerate}

We are now ready to establish the promised result. As an application of it, notice that all sequences \,$(P_n)$ \,considered in the preceding items satisfy condition (a) of the following theorem. Observe also that the conclusion yields the existence, under appropriate assumptions, of an infinite dimensional Fr\'echet space consisting, except for zero, of $(P_n(D))$-hypercyclic functions.

\begin{theorem} \label{main theorem spaceable}
Let \,$\displaystyle\left\{P_n(z)=\sum_{j=m(n)}^{d(n)}c_{j,n} z^j:n\in\N\right\}$ \,be a sequence of nonconstant polynomials fulfilling the following conditions:
\begin{enumerate}
  \item[(a)] The sequence \,$(m(n))$ \,of valences is unbounded.
  \item[(b)] At least one of the properties {\rm (P)}, {\rm (Q)}, or {\rm (R)} is satisfied.
\end{enumerate}
Then the set \,$HC((P_n(D)))$ \,is spaceable in \,$H(\C)$.
\end{theorem}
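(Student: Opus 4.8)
The plan is to apply the spaceability criterion (Theorem \ref{Bonet-Martinez-Peris}) with $X=Y=H(\C)$ and $T_n=P_n(D)$. Here $H(\C)$ is a separable Fr\'echet space supporting the continuous norm $f\mapsto\max_{|z|\le1}|f(z)|$, so the ambient hypotheses hold. Two reductions come first. Since enlarging an orbit preserves density, a closed subspace whose nonzero members are $(P_{n_k}(D))$-hypercyclic consists of $(P_n(D))$-hypercyclic vectors as well; hence it suffices to prove the conclusion along a conveniently chosen subsequence. Using (a), I would therefore pass once and for all to a subsequence --- relabelled $(P_n)$ --- along which $m(n)\uparrow\infty$, and (exploiting the freedom in this choice) thin it so drastically that $m(n)>\max_{k<n}d(k)$ for every $n$, leaving valence--degree gaps as wide as required below. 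Each of (P), (Q), (R) is a limit/boundedness statement as $n\to\infty$ and so survives the passage to a subsequence.

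For condition (i) take $X_0$ to be the dense space of polynomials: if $\deg p<m(n)$ then $P_n(D)p=0$, so $m(n)\uparrow\infty$ forces $P_n(D)p\to0$; this covers all three cases at once. For $Y_0$ and the maps $S_n$ of (ii)--(iii) I would split. If (P) holds (in particular if (R) holds, since (R) implies (P)), let $U$ be the associated $\mathcal{E}$-unicity set, so $Y_0:=\operatorname{span}\{e_w:w\in U\}$ is dense by the lemma on exponential sums; as $P_n(D)e_w=P_n(w)e_w$ and $|P_n(w)|\to\infty$, set $S_ne_w:=P_n(w)^{-1}e_w$ (and $0$ for the finitely many $n$ with $P_n(w)=0$), extended linearly, whence $S_ne_w\to0$ and $P_n(D)S_ne_w=e_w$ eventually, giving (ii) and (iii). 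If only (Q) is available no such exponential exists --- the example $P_n(z)=c_nz^n(1+z)$ above has $P_n\to0$ pointwise --- so I would take $Y_0$ to be the polynomials and factor $P_n(D)=\widetilde P_n(D)\,D^{m(n)}$, where $\widetilde P_n(z)=\sum_{k\ge0}c_{m(n)+k,n}z^k$ has nonzero constant term $c_{m(n),n}$ and is thus invertible on polynomials. Setting $S_n:=I^{m(n)}\widetilde P_n(D)^{-1}$ with $I$ the antiderivative from $0$, commutativity of polynomials in $D$ together with $D^{m(n)}I^{m(n)}=\mathrm{id}$ gives $P_n(D)S_n=\mathrm{id}$ on $Y_0$, which is (iii); that $S_nq\to0$ --- condition (ii) --- is where both clauses of (Q) enter, the large factor $|c_{m(n),n}|^{-1}$ being beaten by the $1/m(n)!$ from $I^{m(n)}$ precisely because $m(n)|c_{m(n),n}|^{k/m(n)}\to\infty$, while boundedness of $\{c_{m(n)+k,n}\}$ controls $\widetilde P_n(D)^{-1}$.

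The crux is condition (iv): a closed, infinite-dimensional $M_0\subset H(\C)$ with $P_n(D)f\to0$ for all $f\in M_0$. I would pick, in each gap, an exponent $p_n$ with $\max_{k<n}d(k)<p_n<m(n)$ and set $M_0:=\overline{\operatorname{span}}\{z^{p_n}:n\in\N\}=\{f\in H(\C):\widehat f(p)=0\text{ whenever }p\notin\{p_n:n\in\N\}\}$, which is manifestly closed and infinite-dimensional. For a single generator, $P_k(D)z^{p_n}\neq0$ forces $m(k)\le p_n<m(n)$, hence $k\le n-1$, so only finitely many operators act nontrivially and $P_k(D)z^{p_n}\to0$. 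The real work is to transfer this to the closed span: for $f=\sum_n a_nz^{p_n}\in M_0$ and fixed $k$ one has $P_k(D)f=\sum_{n:\,p_n\ge m(k)}a_nP_k(D)z^{p_n}$, a tail whose lowest index tends to $\infty$ with $k$ and which must tend to $0$ in $H(\C)$. Using the Cauchy-type estimate $\|P_k(D)z^{p}\|_\rho\le\rho^{p}\gamma_k\,p^{d(k)}$ (for $\rho\ge1$, with $\gamma_k$ depending only on $k$), I would bound $C_n(\rho):=\max_{k<n}\|P_k(D)z^{p_n}\|_\rho$ and arrange --- by thinning the subsequence further and taking each $p_n$ large within its gap --- that $C_n(\rho)^{1/p_n}$ stays bounded in $n$, the factors $p_n^{d(k)/p_n}$ and $\gamma_k^{1/p_n}$ ($k<n$) tending to $1$. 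Since $f\in H(\C)$ forces $|a_n|^{1/p_n}\to0$, this yields $\sum_n|a_n|\,C_n(\rho)<\infty$ for every $\rho$, so the tail vanishes and (iv) follows. The main obstacle, on which I would spend most care, is exactly this interchange of $\lim_k$ and $\sum_n$: the degrees $d(k)$ and coefficients $c_{j,k}$ are a priori wholly uncontrolled, and only the combination of a sufficiently lacunary subsequence with the placement of each $p_n$ in a valence--degree gap tames them (the (Q)-case estimate for $S_n$ being the secondary, more computational, difficulty). With (i)--(iv) in hand, Theorem \ref{Bonet-Martinez-Peris} produces the required spaceable set of hypercyclic functions.
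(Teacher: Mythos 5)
Your proposal is correct and follows essentially the same route as the paper: the Bonet--Mart\'inez-Gim\'enez--Peris criterion with $X_0$ the polynomials, exponentials $e_w$ for case (P) (with (R) reduced to (P)), the factorization $P_n(D)=\widetilde P_n(D)D^{m(n)}$ inverted on polynomials for case (Q), and a closed span of lacunary monomials along a recursively thinned subsequence for $M_0$, controlled by exactly the tail estimate you describe. The only cosmetic difference is that the paper takes the exponents of the monomials to be the valences $m(n_j)$ themselves rather than integers $p_n$ placed strictly inside the valence--degree gaps, and writes out the (Q)-case inverse explicitly via Cramer's rule.
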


\begin{proof}
Note that, by passing to a subsequence if necessary, we can suppose without loss of generality that the sequence \,$(m(n))$ \, is strictly increasing and even that
\begin{equation*}
  3 \leq m(1) \leq d(1) < m(2) \leq d(2) < m(3) \leq d(3) <\cdots< m(n) \leq d(n) \cdots \to \infty.
\end{equation*}
The space $H(\C)$ is metrizable and separable (see \cite[pp. 370 and 373]{kothe}). Thus, we will check all properties (i) to (iv) in Theorem \ref{Bonet-Martinez-Peris} for \,$X=Y=H(\C)$, \,$T_n=P_n(D)$, and appropriate dense subsets \,$X_0\subset H(\C)$, $Y_0\subset H(\C)$, closed infinite dimensional subspace \,$M_0\subset H(\C)$, and mappings \,$S_n:Y_0\to H(\C)$.

\vskip 3pt

First of all, let us prove that if (a) is satisfied, then (iv) in Theorem \ref{Bonet-Martinez-Peris} holds for a suitable \,$M_0$. For this, we shall extend the nice approach given in \cite[Example 10.13]{grosseP}. Set \,$n_1:=1$ \,and, proceeding recursively, suppose that for some \,$k\in\N$ \,the naturals \,$n_1<n_2<\cdots<n_k$ \,have been selected. Since
\,$\frac{x\cdot \log 2}{\log x}\to +\infty$ \,as \,$x\to +\infty$\, and \,$(m(n))$ \,increases unboundedly, we can choose \,$n_{k+1}\in\N$ \,with \,$n_{k+1}>n_k$ \,and
\begin{equation*}
  \log(A_k) + d(n_k) < \frac{m(n_{k+1})}{\log(m(n_{k+1}))} \cdot \log 2,
\end{equation*}
where
\begin{equation*}
  A_k := \sum_{s=m(n_k)}^{d(n_k)}|c_{n_k,s}|.
\end{equation*}
Thus, a sequence \,$(n_j)$ \,of positive integers has been defined. Note that \,$(n_j)$ \,is a strictly increasing sequence with \,$\log(m(n_j))\geq\log 3 > 1$ \,and that \,$\frac{x\cdot\log 2}{\log x}$ \,is a strictly increasing function on \,$[e,+\infty)$, so on \,$[3,+\infty)$. As a consequence, we get
\begin{equation*}
  \frac{\log(A_k)}{\log(m(n_j))} + d(n_k) < \log(2) \cdot \frac{m(n_j)}{\log(m(n_j))}
\end{equation*}
for all \,$j\geq k+1$, or equivalently,
\begin{equation}\label{Ineq Ak}
  \left(\sum_{s = m(n_k)}^{d(n_k)} |c_{n_k,s}| \right) \cdot m(n_j)^{d(n_k)} < 2^{m(n_j)}
\end{equation}
for all $j\geq k+1$.

\vskip 3pt

Let us define \,$M_0$ \,as the set of all entire functions of the form
\begin{equation}\label{Fs in the subspace M0}
  f(z) = \sum_{j=1}^{\infty} a_j z^{m(n_j)},
\end{equation}
that is, \,$f^{(k)}(0)=0$ \,for all \,$k\notin\left\{m(n_j):j\in\N\right\}$. If a sequence \,$(f_n)$\, in \,$H(\C)$\, converges to a function \,$f$\, uniformly on compacta, then \,$\lim_{n\to\infty}f_n^{(k)}(0)=f^{(k)}(0)$\, for all \,$k\in\N\cup\{0\}$, which implies that \,$M_0$ \,is a closed subspace of \,$H(\C)$. Moreover, since the function \,$f_j(z)=z^{m(n_j)}$\, belongs to \,$M_0$ \,for each \,$j$, it follows that \,$M_0$ \,is infinite-dimensional.

\vskip 3pt

Let us show that \,$P_{n_k}(D)f\to 0$ as $k\to\infty$ \,for every \,$f\in M_0$, which will yield the desired property (iv) in Theorem \ref{Bonet-Martinez-Peris} for the sequence of operators $(P_{n_k}(D))$. Take \,$f\in M_0$ \,as in \eqref{Fs in the subspace M0}. Given a compact subset \,$K \subset \C$, there is \,$r>1$ \,such \,$|z|\leq r$ \,for all \,$z \in K$. For every \,$z\in K$ \,we have
\begin{align*}
  |(P_{n_k}(D)f)(z)|
  &= \left| \sum_{s=m(n_k)}^{d(n_k)} c_{n_k,s} \cdot
     D^s\left(\sum_{j=1}^{\infty} a_j z^{m(n_j)} \right) \right| \\
  &= \left| \sum_{s=m(n_k)}^{d(n_k)} c_{n_k,s} \cdot
     D^s \left( \sum_{j=k}^{\infty} a_j z^{m(n_j)} \right) \right| \\
  &= \left| \sum_{j=k}^{\infty} a_j \sum_{s=m(n_k)}^{d(n_k)} c_{n_k,s}\cdot m(n_j) \cdot
     (m(n_j)-1) \cdots (m(n_j)-s+1) z^{m(n_j)-s} \right| \\
  &\leq \sum_{j=k}^{\infty} |a_j| \left(\sum_{s=m(n_k)}^{d(n_k)}|c_{n_k,s}|\right)
     \cdot m(n_j)^{d(n_k)} \cdot r^{m(n_j)}.
\end{align*}
Then the inequality in \eqref{Ineq Ak} implies that
\begin{equation*}
  |(P_{n_k}(D)f)(z)| \leq \sum_{j=k}^{\infty} |a_j|(2r)^{m(n_j)} \to 0
  \quad \text{as } k\to\infty,
\end{equation*}
where the last limit is due to the fact that \,$f$ \,is entire. This shows that \,$P_{n_k}(D)f\to 0$ uniformly on compacta for all \,$f\in M_0$, as required.

\vskip 3pt

Next we will prove that if the property (P) is satisfied, then (i), (ii), and (iii) in Theorem \ref{Bonet-Martinez-Peris} hold for suitable \,$X_0$, \,$Y_0$, and \,$S_n$ ($n\in\N$). Here, we shall follow the lines given in the proof of Theorem 2.4 in \cite{bernalprado}. Assume that (P) is satisfied, that is, there is an $\mathcal{E}$-unicity set $U\subset\C$ with \,$\lim_{n\to\infty}|P_n(z)|=+\infty$\, for all \,$z\in U$. Define \,$X_0$\, as the set of all polynomials on $\C$ and \,$Y_0:={\rm span}\left\{e_w : \, w\in U\right\}$. For each $n\in\N$, the mapping \,$S_n:Y_0\to H(\C)$ is defined by extending linearly the following one defined on \,$\{e_w : w\in U\}$:
\begin{equation*}
  S_n(e_w)=\begin{cases}
             0                  & \hbox{if } P_n(w)=0 \\
             \frac{e_w}{P_n(w)} & \hbox{otherwise.}
           \end{cases}
\end{equation*}
Let us recall that the sequence \,$(m(n))$ \,strictly increases to infinity. On the one hand, given a polynomial \,$g$, we have \,$D^j g=0$ \,for every \,$j>{\rm degree}\,(g)$. Hence, \,$T_n g=P_n(D)g=0$ \,for \,$n$ \,large enough, which entails \,$T_n g\to 0$ as $n\to\infty$ \,for all \,$g\in X_0$. On the other hand, the property (P) implies that \,$S_n\to 0$ as $n\to\infty$ \,on \,$\{e_w : w\in U\}$, and so on its linear span \,$Y_0$.

\vskip 3pt

For fixed $n\in\N$ and $w\in\C$, the following equalities hold for all $z\in\C$:
\begin{equation*}
  (T_n e_w)(z) = (P_n(D)e_w)(z) = \sum_{j=m(n)}^{d(n)}c_{j,n} D^j(e^{wz})
  = \sum_{j=m(n)}^{d(n)}c_{j,n} w^j e^{wz} = P_n(w)e^{wz}.
\end{equation*}
That is, \,$T_n e_w=P_n(w)e_w$. Hence, for each \,$w\in U$ \,and for \,$n$ \,large enough we get
\begin{equation*}
  T_n S_n (e_w) = T_n\left(\frac{e_w}{P_n(w)}\right) = \frac{P_n(w)e_w}{P_n(w)} = e_w.
\end{equation*}
Therefore, the sequence \,$(T_nS_n)$ \,tends to the identity on \,$\left\{e_w: w\in U\right\}$ and, consequently, on its span \,$Y_0$. This concludes the proof of (i), (ii), and (iii) in Theorem \ref{Bonet-Martinez-Peris} in the case that (P) holds.

\vskip 3pt

Finally, let us assume that (Q) is fulfilled and prove the properties (i), (ii), and (iii) in Theorem \ref{Bonet-Martinez-Peris}. In this case, we shall follow closely the lines given in the proof of Theorem 2.15 in \cite{bernalprado}. Although only a minimal adaptation of the proof is necessary, it will be reproduced for the sake of completeness. This time both \,$X_0$\, and \,$Y_0$\, will be the set of all polynomials on $\C$. As it has been mentioned previously, we have that \,$T_n\to 0$ \,pointwise on \,$X_0$.

\vskip 3pt

For fixed $k\in\N\cup\{0\}$ and $n\in\N$, our aim now is to find a polynomial $f$ such that \, $T_n f(z)=z^k$. In order to do that, we define the coefficient $a_{j,n}$ as follows:
\begin{equation*}
  a_{j,n}=\begin{cases}
            c_{j+m(n),n} & \text{if } 0\leq j\leq d(n)-m(n), \\
            0            & \text{otherwise.}
           \end{cases}
\end{equation*}
In particular, $a_{0,n}=c_{m(n),n}\neq 0$ by the definition of valence. We consider the function
\begin{equation*}
  \Psi_n(z) = \sum_{j=0}^{\infty}a_{j,n}z^j = \sum_{j=0}^{d(n)-m(n)}a_{j,n}z^j
\end{equation*}
and the associated equation
\begin{equation}\label{Eq-1 caso (Q)}
  \Psi_n(D) g(z) = z^k,
\end{equation}
where $g$ will be a polynomial of degree not greater than $k$, say, $g(z)=\sum_{s=0}^{k} b_{s,n}z^s$. That is,
\begin{equation*}
  \sum_{j=0}^{d(n)-m(n)}a_{j,n} \left( \sum_{s=0}^{k} b_{s,n} z^s \right)^{(j)} = z^k.
\end{equation*}
Since \,$g^{(j)}(z)=0$\, for all \,$j>k$\, and \,$a_{j,n}=0$\, for all \,$j>d(n)-m(n)$, the equation \eqref{Eq-1 caso (Q)} is equivalent to
\begin{equation*}
  \sum_{j=0}^{k}a_{j,n} \left( \sum_{s=0}^{k} b_{s,n} z^s \right)^{(j)} = z^k,
\end{equation*}
which in turn is equivalent to the following system:
\begin{equation*}
  \begin{cases}
    \sum_{j=s}^{k} a_{j-s,n} b_{j,n} \cdot \frac{j!}{s!} = 0 & \text{for each } s=0,1,...,k-1 \\
    a_{0,n}b_{k,n} = 1.
  \end{cases}
\end{equation*}
This is a square linear system with determinant $a_{0,n}^{k+1}\neq 0$, so it has a unique solution $(b_{0,n},...,b_{k,n})$. Cramer's rule yields
\begin{equation*}
  b_{s,n} = \frac{1}{a_{0,n}^{k+1}} \cdot \sum_{j=1}^{k} \Phi_{j,s,k}(a_{1,n},...,a_{k,n}) \, a_{0,n}^j
\end{equation*}
for each $s\in\{0,1,...,k\}$, where each $\Phi_{j,s,k}$ is a polynomial of $k$ complex variables not depending on $n$. With these values, the function $g_{n,k}(z)=\sum_{s=0}^{k} b_{s,n}z^s$ satisfies the equation $\Psi_n(D)g_{n,k}=z^k$. Therefore, if the polynomial $f_{n,k}$ is defined as
\begin{equation}\label{Eq-2 caso (Q)}
  f_{n,k}(z) = \sum_{s=0}^{k} b_{s,n} \frac{z^{s+m(n)}}{(s+1)(s+2)\cdots(s+m(n))},
\end{equation}
then
\begin{align*}
  (T_nf_{n,k})(z) &= (P_n(D)f)(z) = \sum_{j=m(n)}^{d(n)}c_{j,n}f_{n,k}^{(j)}(z)
  = \sum_{j=0}^{d(n)-m(n)}a_{j,n}D^j \left(f_{n,k}^{(m(n))}\right)(z) \\
  &= \sum_{j=0}^{d(n)-m(n)}a_{j,n}g_{n,k}^{(j)}(z)
  = (\Psi_n(D)g_{n,k})(z) = z^k.
\end{align*}
This proves that the polynomial $f_{n,k}$ satisfies the equation $T_n f_{n,k}(z)=z^k$.

\vskip 3pt

From (Q), the sequence $\{c_{j+m(n),n}: n\in\N\}$ \,is bounded for each $j\in\N$, so the sequences $(a_{1,n}),(a_{2,n}),\ldots,(a_{k,n})$ are bounded. As a consequence, there exists a finite positive constant $C$, not depending on $n$, such that
\begin{equation*}
  |\Phi_{j,s,k} (a_{1,n},...,a_{k,n})| \leq C
\end{equation*}
for all $s\in\{0,1,...,k\}$ and all $j\in\{1,...,k\}$, which implies
\begin{equation}\label{Eq-3 caso (Q)}
  \left|b_{s,n}\right| \leq \frac{1}{|a_{0,n}|^{k+1}} \cdot \sum_{j=1}^{k}C|a_{0,n}|^j
  = \sum_{j=1}^{k}\frac{C}{|a_{0,n}|^{k+1-j}}.
\end{equation}
Let us fix \,$r>1$. By equations \eqref{Eq-2 caso (Q)} and \eqref{Eq-3 caso (Q)}, if \,$|z|\leq r$, then
\begin{align*}
  |f_{n,k}(z)| &\leq \sum_{s=0}^{k} |b_{s,n}|\cdot \frac{r^{s+m(n)}}{(s+1)(s+2)\cdots(s+m(n))} \\
  &\leq \sum_{s=0}^{k}\sum_{j=1}^{k}\frac{C}{|a_{0,n}|^{k+1-j}} \cdot
  \frac{r^{k+m(n)}}{(s+1)(s+2)\cdots(s+m(n))} \\
  &= \sum_{s=0}^{k}\sum_{j=1}^{k}\frac{C}{|a_{0,n}|^{k+1-j}} \cdot
  \frac{r^{k+m(n)}\cdot s!}{(s+m(n))!}.
\end{align*}
Since \,$p!\cdot q!\leq (p+q)!$ \,for all \,$p,q\in\N$, if \,$|z|\leq r$, we obtain
\begin{equation*}
  |f_{n,k}(z)| \leq \sum_{s=0}^{k}\sum_{j=1}^{k}\frac{C}{|a_{0,n}|^{k+1-j}} \cdot
  \frac{r^{k+m(n)}}{m(n)!}
  = (k+1)\sum_{j=1}^{k}Cr^k \cdot \frac{r^{m(n)}}{|c_{m(n),n}|^{k+1-j} \cdot m(n)!}.
\end{equation*}
%Note that
%\begin{align*}
%  \left(|c_{m(n),n}|^{k+1-j} \cdot m(n)!\right)^{\frac{1}{m(n)}}
%  &= \left(\frac{m(n)!}{\sqrt{2\pi m(n)}\cdot \left(\frac{m(n)}{e}\right)^{m(n)}}\right)^{\frac{1}{m(n)}} \cdot
%  \frac{(2\pi m(n))^{\frac{1}{2m(n)}}}{e} \cdot m(n)|c_{m(n),n}|^{\frac{k+1-j}{m(n)}}.
%\end{align*}
Recall that $\lim_{n\to\infty}m(n)=+\infty$ and $\lim_{n\to\infty}m(n)|c_{m(n),n}|^{\frac{k+1-j}{m(n)}}=+\infty$ for every $j$ by the property (Q). Then Stirling's formula leads us to
\begin{equation*}
  \lim_{n\to\infty}\Big(|c_{m(n),n}|^{k+1-j} \cdot m(n)!\Big)^{\frac{1}{m(n)}} = +\infty,
\end{equation*}
so there is $n_0\in\N$ such that
\begin{equation*}
  \Big(|c_{m(n),n}|^{k+1-j} \cdot m(n)!\Big)^{\frac{1}{m(n)}} > 2r
\end{equation*}
for all $n\geq n_0$. Hence, if $|z|\leq r$ and $n\geq n_0$, then
\begin{equation*}
  |f_{n,k}(z)| \leq (k+1)\sum_{j=1}^{k}Cr^k \cdot \frac{r^{m(n)}}{(2r)^{m(n)}} \to 0
  \quad \text{as } n\to\infty.
\end{equation*}
Therefore, for each $k\in\N\cup\{0\}$, $f_{n,k}\to 0$ uniformly on compacta as $n\to\infty$.

\vskip 3pt

The mapping $S_n:Y_0\to H(\C)$ is defined on the space \,$Y_0$ \,of all polynomials by
\begin{equation*}
  S_n\left(z^k\right) = f_{n,k}(z)
\end{equation*}
for each $k\in\N\cup\{0\}$ \,and\, $n\in\N$ and then extended to \,$Y_0$ \,by linearity. We have already proved that, for each $k$, $S_n \left(z^k\right)=f_{n,k}\to 0$ as $n\to\infty$ and $T_n\left(S_n z^k\right)=z^k$. Hence, it is plain that \,$S_n f\to 0$ \,and \,$T_n(S_n f)=f\to f$ \,as \,$n\to\infty$ \,for every polynomial \,$f\in Y_0$. Consequently, properties (i)-(ii)-(iii) in Theorem \ref{Bonet-Martinez-Peris} are satisfied also in this case. This concludes the proof.
\end{proof}

\vskip 3pt

Notice that, for any nonconstant polynomial \,$P$ \,with \,$P(0)=0$, the valences of the sequence of polynomials \,$(P(z)^n)$ \,are unbounded. Moreover, since \,$|P(z)|\to\infty$ \,as \,$|z|\to\infty$, it follows that \,$|P(z)|>2$ \,on some circle \,$|z|=r$ \,and then the property (R) is satisfied. Hence, Menet's result of spaceability of \,$HC(P(D))$ \,is recovered in this case, since \,$(P^n)(D)=P(D)\circ \cdots \circ P(D)$ ($n$ times) \,for all \,$n \in \N$.

\begin{theorem}\label{max-dens-lineable}
Let \,$\left\{P_n(z)=\displaystyle{\sum_{j=m(n)}^{d(n)}c_{j,n} z^j}\right\}_{n\geq 1}$ \,be a sequence of nonconstant polynomials fulfilling the following conditions:
\begin{enumerate}
  \item[(a)] The sequence \,$(m(n))$ \,of valences is unbounded.
  \item[(b)] At least one of the properties \,{\rm (P), (Q) or (R)} \,is satisfied.
\end{enumerate}
Then the set \,$HC((P_n(D)))$ \,is maximal dense-lineable in \,$H(\C)$.
\end{theorem}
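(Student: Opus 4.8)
The plan is to derive the statement from the abstract criterion of Theorem \ref{A stronger than B}, letting the spaceability already proved in Theorem \ref{main theorem spaceable} supply the ``large'' subspace that criterion demands. Throughout I take $X=H(\C)$, which is a metrizable separable topological vector space, and I record that $\dim H(\C)=\mathfrak{c}$: an entire function is determined by its Taylor coefficients, so $|H(\C)|\leq\mathfrak{c}$ and hence its Hamel dimension is at most $\mathfrak{c}$, while every infinite dimensional F-space has dimension at least $\mathfrak{c}$ by \cite{Popoola}. Thus the target equality $\dim(M)=\dim(X)$ in condition (iii) of Theorem \ref{A stronger than B} amounts to producing a subspace of dimension $\mathfrak{c}$ inside the hypercyclic set.

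First I would reduce to a convenient subsequence. By hypothesis (a) the valences $(m(n))$ are unbounded, so I can extract $(P_{n_k})$ with $m(n_k)$ strictly increasing to $+\infty$. This subsequence still satisfies (a) and it inherits (b), since each of (P), (Q), (R) survives passage to a subsequence (a sequence tending to $+\infty$, a boundedness requirement, and a minimum-modulus condition are all inherited). Because any vector hypercyclic for $(P_{n_k}(D))$ is a fortiori hypercyclic for $(P_n(D))$ --- the full orbit contains the subsequence orbit --- one has $HC((P_{n_k}(D)))\subseteq HC((P_n(D)))$. As maximal dense-lineability is inherited by every superset (the witnessing dense subspace of dimension $\dim X$ lies inside the larger set as well), it suffices to prove that $HC((P_{n_k}(D)))$ is maximal dense-lineable.

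I would then apply Theorem \ref{A stronger than B} with $A=HC((P_{n_k}(D)))$ and $B=$ the set of all polynomials on $\C$. Condition (iv) is immediate, because the polynomials form a dense subspace of $H(\C)$ and so are dense-lineable. For (ii), any polynomial $p$ satisfies $P_{n_k}(D)p=0$ once $m(n_k)>\deg p$, so its orbit is finite and cannot be dense; hence no polynomial is hypercyclic and $A\cap B=\varnothing$. For (i), if $x\in A$ and $p$ is a polynomial, then $P_{n_k}(D)(x+p)=P_{n_k}(D)x$ for all large $k$, so the orbit of $x+p$ agrees with that of $x$ outside a finite set; since a dense subset of the Fréchet space $H(\C)$ (which has no isolated points) remains dense after deleting finitely many points, the orbit of $x+p$ is dense and $x+p\in A$, giving $A+B\subseteq A$. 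The one substantive ingredient is (iii): I invoke Theorem \ref{main theorem spaceable} for $(P_{n_k})$ --- legitimate since it satisfies (a) and (b) --- to obtain a closed infinite dimensional subspace $M\subseteq A\cup\{0\}$. Being a closed infinite dimensional subspace of a Fréchet space, $M$ is itself an infinite dimensional F-space, so $\dim M\geq\mathfrak{c}$; and $M\subseteq H(\C)$ forces $\dim M\leq\mathfrak{c}$, whence $\dim M=\mathfrak{c}=\dim X$, which is precisely (iii).

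With (i)--(iv) in place, Theorem \ref{A stronger than B} gives that $HC((P_{n_k}(D)))$ is maximal dense-lineable, and the superset remark transfers this to $HC((P_n(D)))$. I expect the main difficulty to be conceptual rather than computational: recognizing that the already established spaceability delivers a subspace of \emph{full} dimension $\mathfrak{c}$ (so no fresh construction is needed for (iii)), choosing the polynomials as the auxiliary set $B$, and noticing that the passage to the subsequence $(P_{n_k})$ is genuinely required --- for the full sequence the relation $A+B\subseteq A$ with $B$ the polynomials is not guaranteed, since $P_n(D)p$ need not tend to $0$ along indices of bounded valence.
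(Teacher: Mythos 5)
Your proposal is correct and follows essentially the same route as the paper: Theorem \ref{A stronger than B} applied with $A=HC((P_n(D)))$, $B$ the set of polynomials, and the full-dimensional subspace for condition (iii) supplied by Theorem \ref{main theorem spaceable} together with \cite{Popoola}. Your explicit reduction to a subsequence with $m(n_k)\to\infty$ (and the transfer of maximal dense-lineability back to the full sequence) is a worthwhile extra precaution, since the paper's verification of $A+B\subset A$ tacitly uses $P_n(D)g\to 0$, which for a merely unbounded valence sequence only holds after such a reduction.
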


\begin{proof}
By Theorem \ref{main theorem spaceable}, there exists an infinite dimensional closed subspace $M\subset HC((P_n(D)))\cup\{0\}$. By \cite{Popoola}, the dimension of both $M$ and $H(\C)$ is $\mathfrak{c}$. Then the ma\-xi\-mal dense-lineability of \,$HC((P_n(D)))$ \,follows from Theorem \ref{A stronger than B} with the following choice of characters: $X=H(\C)$, $A=HC((P_n(D)))$, and $B$ the set of all polynomials on $\C$. Indeed, since the polynomials form a dense subspace of \,$H(\C)$, we obtain the dense-lineability of \,$B$. Now, for a prescribed polynomial \,$g$ \,there is \,$n_0\in\N$ \,such that \,$m(n)>{\rm degree}\,(g)$ \,for all \,$n\geq n_0$, so \,$D^j g=0$ \,for all \,$j\geq m(n)$ \,if \,$n\geq n_0$. Thus, $P_n(D)g=0$ \,for every \,$n\geq n_0$. Therefore, no polynomial can be hypercyclic, that is, $A\cap B=\varnothing$. Finally, let \,$f\in A$, $g\in B$, \,and \,$h\in H(\C)$. Since $f$ is hypercyclic, there is a strictly increasing sequence \,$(n_k)\subset\N$ \,such that \,$P_{n_k}(D)f\to h$ \, uniformly on compacta. As \,$P_n(D)g\to 0$ \,as \,$n\to\infty$, we obtain that $P_{n_k}(D)(f+g)\to h$ as $k\to\infty$ as well. In other words, $f+g$ \,is hypercyclic or, with language of sets, $A+B\subset A$. This completes the checking of all properties (i) to (iv) in Theorem \ref{A stronger than B}.
\end{proof}

\vskip 3pt

\section{Subspaces with prefixed hypercyclic functions.}\label{SeccionPuntual}

In \cite{PellegrinoRaposo}, Pellegrino and Raposo introduced the notion of pointwise lineable set, which in turn led to the concept of infinitely pointwise lineable set defined in \cite{CalderonGerlachPrado}. Given a cardinal number $\alpha$, a subset $A$ of a topological vector space $X$ is said to be {\it pointwise $\alpha$-lineable} if for every $x\in A$ there exists a vector subspace $W_x\subset X$ such that $\dim(W_x)=\alpha$ and $x\in W_x\subset A\cup\{0\}$. If each $W_x$ can be chosen to be closed, then $A$ is said to be {\it pointwise $\alpha$-spaceable}. In addition, if $\lambda$ is an infinite cardinal number, the subset $A$ is {\it $\lambda$-infinitely pointwise $\alpha$-dense-lineable} if for every $x\in A$ there is a family $\left\{W_k:k\in\Lambda\right\}$, where $\Lambda$ is set of cardinality $\lambda$, such that each $W_k$ is a dense subspace of $X$, $\dim(W_k)=\alpha$, $x\in W_k\subset A\cup\{0\}$, and $W_k\cap W_l={\rm span}\{x\}$ whenever $k\neq l$. When $\lambda=\aleph_0$, we recover the original definition given in \cite{CalderonGerlachPrado} (see also \cite{Emerick} for an arbitrary $\lambda$).

\begin{theorem}\label{Pointwise spaceability}
Let us suppose that $X$ and \,$Y$ are two topological vector spaces, \,$Y$ is metrizable and separable, and $T_n:X\to Y$ is a continuous linear mapping for each $n\in\N$. Let $\alpha$ be an infinite cardinal number and assume that, for each subsequence $(T_{n_k})$ of $(T_n)$, the set $HC((T_{n_k}))$ is $\alpha$-spaceable. Then $HC((T_n))$ is pointwise $\alpha$-spaceable.
\end{theorem}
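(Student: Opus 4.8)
The plan is to reduce pointwise $\alpha$-spaceability to the hypothesis by, for each prescribed $x$, manufacturing a single well-chosen subsequence and then enlarging the subspace supplied by $\alpha$-spaceability so that it absorbs $x$. Two elementary observations drive everything. First, for \emph{every} subsequence $(T_{n_k})$ one has $HC((T_{n_k}))\subset HC((T_n))$, since a dense subsequence of an orbit forces the whole orbit to be dense. Second, $HC((T_n))$ is invariant under multiplication by nonzero scalars, because multiplication by $\lambda\neq0$ is a homeomorphism of $Y$ and therefore carries the dense orbit of $x$ onto the (dense) orbit of $\lambda x$. I shall also use repeatedly that a nontrivial metrizable topological vector space has no isolated points, so any sequence with dense range meets each nonempty open set for infinitely many indices.

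Fix $x\in HC((T_n))$ (the degenerate case $Y=\{0\}$ being trivial). I first extract a strictly increasing $(m_j)$ with $T_{m_j}x\to 0$. This is possible because the orbit $\{T_n x\}$ is dense, so $0$ lies in its closure, and by the remark above every neighbourhood of $0$ is hit for infinitely many indices; choosing $m_j$ so that $T_{m_j}x$ lands in the $j$-th member of a decreasing neighbourhood base at $0$ does the job. Next I apply the hypothesis to the subsequence $(T_{m_j})$: the set $HC((T_{m_j}))$ is $\alpha$-spaceable, so there is a closed subspace $M\subset HC((T_{m_j}))\cup\{0\}$ with $\dim M=\alpha$. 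I then set $W_x:={\rm span}\{x\}+M$. It is closed because the sum of a closed subspace and a finite-dimensional subspace is closed, it contains $x$, and its dimension equals $\alpha$ (being either $\alpha$ or $\alpha+1=\alpha$).

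It remains to verify $W_x\setminus\{0\}\subset HC((T_n))$. Write a nonzero $w\in W_x$ as $w=\lambda x+m$ with $m\in M$. If $\lambda=0$ then $w=m\in M\setminus\{0\}\subset HC((T_{m_j}))\subset HC((T_n))$; if $m=0$ then $w=\lambda x\in HC((T_n))$ by scalar invariance. The decisive case is $\lambda\neq0\neq m$: here $T_{m_j}w=\lambda T_{m_j}x+T_{m_j}m$, where $\lambda T_{m_j}x\to 0$ and $\{T_{m_j}m\}$ is dense because $m\in HC((T_{m_j}))$. The heart of the argument is thus the perturbation lemma that adding a null sequence to a sequence with dense range leaves the range dense. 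Working with a translation-invariant metric $d$ on $Y$ and using that $\{T_{m_j}m\}$ meets every ball $B_d(p,\varepsilon/2)$ for infinitely many $j$, one selects such a $j$ so large that $d(\lambda T_{m_j}x,0)<\varepsilon/2$, whence $d(T_{m_j}w,p)<\varepsilon$; therefore $\{T_{m_j}w\}$ is dense and $w\in HC((T_{m_j}))\subset HC((T_n))$. I expect this mixed case — concretely, confirming that the perturbed orbit stays dense — to be the only genuine obstacle, the rest being formal once the null subsequence $(m_j)$ has been secured.
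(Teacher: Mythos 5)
Your proposal is correct and follows essentially the same route as the paper's proof: extract a subsequence along which the orbit of the prescribed point $x$ tends to $0$, apply the $\alpha$-spaceability hypothesis to that subsequence to get a closed subspace $M$, and adjoin ${\rm span}\{x\}$, with the mixed case $w=\lambda x+m$ handled by the observation that a null perturbation of a dense orbit remains dense. The only cosmetic difference is that the paper phrases this last step by passing to a further subsequence with $T_{n_{k_j}}v\to y$ rather than via a translation-invariant metric, and it additionally records that $x\notin M$ so that the sum is direct; neither affects the substance.
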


\begin{proof}
Plainly, there exists an element \,$x_0\in HC((T_n))$. Then there exists a subsequence \,$(T_{n_k})$\, of \,$(T_n)$\, such that \,$T_{n_k}x_0\to 0$\, as \,$k\to\infty$. From the assumption, the set \,$HC((T_{n_k}))$\, is $\alpha$-spaceable. That is, there is a closed subspace \,$V\subset X$\, such that \,$\dim(V)=\alpha$\, and \,$V\subset HC((T_{n_k}))\cup\{0\}$. Since \,$\left\{T_n x_0:n\in\N\right\}$\, is dense in \,$Y$\, and \,$T_{n_k}x_0\to 0$, it follows that \,$x_0\notin HC((T_{n_k}))\cup\{0\}$\, and, consequently, \,$x_0\notin V$. Since \,$V$\, is closed, the direct sum \,$W=V\bigoplus{\rm span}\{x_0\}$\, is also closed (see, e.g., \cite[p. 32]{Rudin}). Thus, \,$W$\, is a closed subspace of \,$X$\, such that \,$\dim(W)=\alpha$\, and \,$x_0\in W$.

\vskip 3pt

Our final task is to show that $W\subset HC((T_n))\cup\{0\}$. With this aim, let us fixed an element $w\in W\backslash\{0\}$. Then there are a vector $v\in V$ and a scalar $\lambda$ such that $w=v+\lambda x_0$. At this point, we have to distinguish two cases:
\begin{itemize}
  \item If $v=0$, then $\lambda\neq 0$ and hence $w=\lambda x_0\in HC((T_n))$.
  \item Let us suppose now that $v\neq 0$ and let us prove that $w$ is hypercyclic for the sequence $(T_n)$. For a fixed vector $y\in Y$, since $v\in V\backslash\{0\}\subset HC((T_{n_k}))$, there exists a subsequence $\left(T_{n_{k_j}}\right)$ of $(T_{n_k})$ such that $T_{n_{k_j}}v\to y$ as $j\to\infty$. As $T_{n_k}x_0\to 0$ as $k\to\infty$, we also have that $T_{n_{k_j}}x_0\to 0$ as $j\to\infty$. Therefore, by linearity, we get that
      \begin{equation*}
        T_{n_{k_j}}w = T_{n_{k_j}}v + \lambda T_{n_{k_j}}x_0 \to y + 0 = y
        \quad \text{as } j\to\infty.
      \end{equation*}
      This shows that $w\in HC((T_{n_k}))$ in this case too.
\end{itemize}
Summarizing, any vector in $W\backslash\{0\}$ is hypercyclic for the sequence $(T_n)$. Thus, the proof is finished.
\end{proof}

The following assertion, that reinforces Theorem \ref{main theorem spaceable}, is an immediate consequence of Theorems \ref{main theorem spaceable} and \ref{Pointwise spaceability}, since the dimension of every infinite dimensional closed subspace of $H(\C)$ is $\mathfrak{c}$ (see \cite{Popoola}). Note also that, in order to apply Theorem \ref{Pointwise spaceability}, we may start with any subsequence \,$(P_{n_k}(D))$ \,of \,$(P_n(D))$ \,satisfying that \,$m(n_k)$ is increasing to infinity.

\begin{theorem}\label{HC((P_n(D))) es pointwise spaceable}
Let \,$\left\{P_n(z)=\displaystyle{\sum_{j=m(n)}^{d(n)}c_{j,n} z^j}\right\}_{n\geq 1}$ \,be a sequence of nonconstant polynomials fulfilling the following conditions:
\begin{enumerate}
  \item[(a)] The sequence \,$(m(n))$ \,of valences is unbounded.
  \item[(b)] At least one of the properties {\rm (P)}, {\rm (Q)}, or {\rm (R)} \,is satisfied.
\end{enumerate}
Then the set \,$HC((P_n(D)))$ is pointwise $\mathfrak{c}$-spaceable in \,$H(\C)$.
\end{theorem}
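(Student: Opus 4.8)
The plan is to obtain the statement as a direct consequence of Theorems \ref{main theorem spaceable} and \ref{Pointwise spaceability}, taking $\alpha=\mathfrak{c}$, $X=Y=H(\C)$ and $T_n=P_n(D)$. The bridge between the two results is that every infinite dimensional closed subspace of $H(\C)$ has dimension $\mathfrak{c}$ (see \cite{Popoola}); hence, as soon as Theorem \ref{main theorem spaceable} produces a closed infinite dimensional subspace inside $HC((P_n(D)))\cup\{0\}$, that subspace is automatically $\mathfrak{c}$-dimensional, so ``spaceable'' upgrades to ``$\mathfrak{c}$-spaceable'' at no cost. The only genuine task is therefore to meet the hypothesis of Theorem \ref{Pointwise spaceability}, namely that $HC$ of \emph{every} relevant subsequence of $(P_n(D))$ is $\mathfrak{c}$-spaceable.

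The first thing I would check is that conditions (P), (Q) and (R) are inherited by subsequences. This is immediate: (P) and (R) assert that a certain quantity tends to $+\infty$, and such limits persist along any subsequence; in (Q) the divergence $\lim_n m(n)|c_{m(n),n}|^{k/m(n)}=+\infty$ persists as well, while boundedness of $\{c_{k+m(n),n}:n\in\N\}$ passes to any subset. By contrast, condition (a) --- unboundedness of $(m(n))$ --- need \emph{not} survive passing to an arbitrary subsequence. This is exactly why I would first fix, using (a), a subsequence $(n_k)$ with $m(n_k)$ strictly increasing to $\infty$, as the remark preceding the theorem permits. For this $(n_k)$ and for \emph{every} further subsequence of it, (a) holds (the valences tend to $\infty$) and (b) is inherited; hence Theorem \ref{main theorem spaceable} applies to each of them and, combined with \cite{Popoola}, shows that the corresponding $HC$ set is $\mathfrak{c}$-spaceable. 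This verifies the hypothesis of Theorem \ref{Pointwise spaceability} for the operator sequence $(P_{n_k}(D))$, and yields that $HC((P_{n_k}(D)))$ is pointwise $\mathfrak{c}$-spaceable.

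The last --- and, I expect, the most delicate --- step is to secure this conclusion for the full sequence $(P_n(D))$ together with a \emph{prescribed} hypercyclic function. Since $HC((P_{n_k}(D)))\subset HC((P_n(D)))$, any subspace produced already lies in $HC((P_n(D)))\cup\{0\}$; the difficulty is that an arbitrary $x_0\in HC((P_n(D)))$ need not be hypercyclic for the fixed subsequence. Thus the heart of the matter is to coordinate the two demands: given $x_0\in HC((P_n(D)))$, I would need a single subsequence $(n_k)$ along which simultaneously $m(n_k)\to\infty$ (so that the previous paragraph supplies a $\mathfrak{c}$-dimensional closed $V\subset HC((P_{n_k}(D)))\cup\{0\}$) and $P_{n_k}(D)x_0$ converges. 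Density of the orbit of $x_0$ makes a convergent sub-orbit available in principle, and one checks that convergence to \emph{any} limit $L$ suffices: in the direct-sum construction $W=V\oplus{\rm span}\{x_0\}$ of Theorem \ref{Pointwise spaceability}, for $w=v+\lambda x_0$ with $v\in V\setminus\{0\}$ one reaches a target $z$ by approximating $z-\lambda L$ with the orbit of $v$. The extraction of this adapted subsequence --- matching the valence blow-up $m(n_k)\to\infty$ with control of the prescribed vector's sub-orbit --- is the main obstacle, and it is precisely what the remark ``we may start with any subsequence with $m(n_k)$ increasing to infinity'' is meant to furnish.
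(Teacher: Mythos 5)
Your reduction is exactly the one the paper uses: the statement is derived from Theorems \ref{main theorem spaceable} and \ref{Pointwise spaceability} together with the fact from \cite{Popoola} that every infinite dimensional closed subspace of $H(\C)$ has dimension $\mathfrak{c}$, and the preliminary passage to a subsequence with $m(n_k)\nearrow\infty$ is introduced for precisely the reason you give, namely that condition (a) is the only hypothesis not inherited by arbitrary subsequences, whereas (P), (Q), (R) are. Your further observation that in the proof of Theorem \ref{Pointwise spaceability} convergence of $T_{n_k}x_0$ to an arbitrary limit $L$ would serve as well as convergence to $0$ is also correct ($x_0\notin V$ still follows, and one approximates $y-\lambda L$ by the orbit of $v$).

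The difficulty is that your argument stops exactly where the real work begins, and you say so yourself. Applying Theorem \ref{Pointwise spaceability} to the fixed subsequence $(P_{n_k}(D))$ only yields that $HC((P_{n_k}(D)))$ is pointwise $\mathfrak{c}$-spaceable, and this set may be strictly smaller than $HC((P_n(D)))$. To treat an arbitrary $x_0\in HC((P_n(D)))$ you need a \emph{single} subsequence along which simultaneously $m(n_k)\to\infty$ and $P_{n_k}(D)x_0$ converges. Density of the orbit guarantees, for each target, \emph{some} subsequence of the full sequence converging to it, but nothing in the hypotheses forces any such subsequence to meet $\{n:\,m(n)>M\}$ infinitely often for every $M$: a priori all of the approximation could occur along indices of bounded valence. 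Conversely, along a subsequence with $m(n_k)\to\infty$ the orbit of $x_0$ need not be locally bounded, so Montel's theorem does not supply a convergent sub-orbit either. Hence ``available in principle'' is not a proof; this extraction is the one nontrivial assertion behind the theorem, and your proposal leaves it unestablished (as, it must be said, does the paper's own two-line justification, whose structure you have reconstructed faithfully). A complete write-up would have to either prove this adapted-subsequence lemma or weaken the conclusion to pointwise spaceability of $HC((P_{n_k}(D)))$ for the chosen subsequence.
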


Observe that the last result is optimal in terms of the dimension of the closed subspace.

Inspired by \cite[Theorem 2.3]{CalderonGerlachPrado} (see also \cite{Emerick}), we obtain the next pointwise version of Theorem \ref{A stronger than B}:

\begin{theorem}\label{A stronger than B. Pointwise}
Let us suppose that \,$X$ is a metrizable separable topological vector space, $\alpha$ is an infinite cardinal number, and \,$A$ and \,$B$ are two subsets of \,$X$ with the following properties:
\begin{enumerate}
  \item[(ii)] $A+B\subset A$.
  \item[(ii)] $A\cap B=\varnothing$.
  \item[(iii)] $A$ is pointwise $\alpha$-lineable.
  \item[(iv)] $B$ is dense-lineable.
\end{enumerate}
Then the set \,$A$ is $\alpha$-infinitely pointwise $\alpha$-dense-lineable.
\end{theorem}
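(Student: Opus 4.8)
The plan is to imitate, in a pointwise fashion, the proof of Theorem \ref{A stronger than B}, producing for each \,$x\in A$\, not merely one, but a whole family of \,$\alpha$\, dense subspaces of dimension \,$\alpha$\, lying in \,$A\cup\{0\}$\, and mutually intersecting only along \,$\mathrm{span}\{x\}$. Fix \,$x\in A$. The pointwise \,$\alpha$-lineability of \,$A$\, furnishes a subspace \,$W_x\subseteq A\cup\{0\}$\, with \,$x\in W_x$\, and \,$\dim W_x=\alpha$, while the dense-lineability of \,$B$\, furnishes a dense subspace \,$N\subseteq B\cup\{0\}$; since \,$X$\, is separable and metrizable, \,$N$\, may be taken of countable dimension. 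The relation \,$A\cap B=\varnothing$\, forces \,$W_x\cap N=\{0\}$, so that the algebraic sum \,$W_x+N$\, is direct and every \,$w\in W_x+N$\, has a unique decomposition \,$w=p+q$\, with \,$p\in W_x$\, and \,$q\in N$.

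The first key step is to manufacture a single dense subspace \,$W\subseteq A\cup\{0\}$\, with \,$x\in W$\, and \,$\dim W=\alpha$. I fix a basis \,$\{x\}\cup\{u_i:i\in I\}$\, of \,$W_x$\, (with \,$|I|=\alpha$), a sequence \,$(d_m)_{m\geq 1}$\, in \,$N$\, each of whose tails \,$\{d_m:m\geq M\}$\, is dense in \,$X$\, (obtained by repeating a countable dense subset of \,$N$), and a countable family of distinct basis vectors \,$u_{i_1},u_{i_2},\dots$. Exploiting metrizability, I choose scalars \,$\varepsilon_m\neq 0$\, so small that \,$\varepsilon_m u_{i_m}\to 0$, and set \,$g_m=d_m+\varepsilon_m u_{i_m}$. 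Then I let
\[
  W = \mathrm{span}\Big( \{x\} \cup \{g_m : m\geq 1\} \cup \{u_i : i\in I\setminus\{i_m:m\geq 1\}\} \Big).
\]
Density follows because, given any target and neighbourhood, one first passes far enough along the sequence that \,$\varepsilon_m u_{i_m}$\, is negligible and then uses the density of the tail to place \,$d_m$\, near the target, whence \,$g_m\in W$\, lies near it. That \,$W\setminus\{0\}\subseteq A$\, is read off from the unique decomposition \,$w=p+q$\, (\,$p\in W_x$, $q\in N$): if \,$p\neq 0$\, then \,$p\in A$\, and either \,$q=0$\, (so \,$w=p\in A$) or \,$q\in B$\, (so \,$w=p+q\in A+B\subseteq A$); whereas \,$p=0$\, forces, by linear independence of the chosen basis together with \,$\varepsilon_m\neq 0$, all coefficients to vanish, so \,$w=0$. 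Finally \,$\dim W=\alpha$, since \,$W\subseteq W_x+N$\, has dimension at most \,$\alpha+\aleph_0=\alpha$\, and contains the \,$\alpha$-many leftover vectors \,$u_i$.

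The second step upgrades this to \,$\alpha$\, subspaces. Using \,$\alpha\cdot\alpha=\alpha$, I rewrite the basis of \,$W_x$\, as \,$\{x\}\cup\{u_{k,\xi}:(k,\xi)\in\Lambda\times\Xi\}$\, with \,$|\Lambda|=|\Xi|=\alpha$, and set \,$V_k=\mathrm{span}(\{x\}\cup\{u_{k,\xi}:\xi\in\Xi\})$, so that each \,$V_k\subseteq A\cup\{0\}$\, has dimension \,$\alpha$, contains \,$x$, and \,$V_k\cap V_l=\mathrm{span}\{x\}$\, for \,$k\neq l$. Applying the construction of the first step inside each \,$V_k$\, (with the common dense sequence \,$(d_m)$\, and fresh blocks \,$u_{k,\xi_m},\,\varepsilon_{k,m}$) yields dense subspaces \,$W_k\subseteq A\cup\{0\}$\, of dimension \,$\alpha$\, through \,$x$. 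For \,$k\neq l$, any \,$w\in W_k\cap W_l$\, has, by uniqueness of its \,$W_x$-component in \,$W_x\oplus N$, a \,$W_x$-part lying in \,$V_k\cap V_l=\mathrm{span}\{x\}$; feeding this back into its expression as an element of \,$W_k$\, and invoking linear independence together with \,$\varepsilon_{k,m}\neq 0$\, collapses all remaining coefficients, giving \,$w\in\mathrm{span}\{x\}$. Hence \,$W_k\cap W_l=\mathrm{span}\{x\}$, and the family \,$\{W_k:k\in\Lambda\}$\, witnesses that \,$A$\, is \,$\alpha$-infinitely pointwise \,$\alpha$-dense-lineable.

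The step I expect to be the main obstacle is reconciling the density of each \,$W_k$\, with the requirement \,$W_k\setminus\{0\}\subseteq A$: since \,$B$\, is itself dense-lineable, \,$W_k$\, must be dense yet disjoint from \,$B$, so it cannot contain the dense set \,$N$\, outright. The device that resolves this is the vanishing-scalar diagonal \,$g_m=d_m+\varepsilon_m u_{i_m}$, which places \,$N$\, in the closure of \,$W_k$\, without placing it in \,$W_k$; keeping the coefficients \,$\varepsilon_{k,m}$\, nonzero is precisely what later guarantees the clean intersection \,$W_k\cap W_l=\mathrm{span}\{x\}$.
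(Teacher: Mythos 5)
Your proof is correct and follows essentially the same route as the paper: fix $x\in A$, use pointwise $\alpha$-lineability to get a subspace $W_x\ni x$ inside $A\cup\{0\}$, split its basis into $\alpha$ blocks via $\alpha^2=\alpha$ to obtain the subspaces $V_k$ meeting only in ${\rm span}\{x\}$, and then densify each $V_k$ by perturbing countably many basis vectors with a tail-dense sequence from the dense subspace of $B\cup\{0\}$, using $A+B\subset A$ and $A\cap B=\varnothing$ to keep the span in $A\cup\{0\}$ and to control the intersections. The only difference is one of presentation: the paper delegates this perturbation step to \cite[Theorem 2.3]{CalderonGerlachPrado}, whereas you carry it out explicitly (and correctly, including the small-scalar device $g_m=d_m+\varepsilon_m u_{i_m}$ that reconciles density with disjointness from $B$).
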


\begin{proof}
It is essentially the same proof done in \cite[Theorem 2.3]{CalderonGerlachPrado} for a countable collection of subspaces. We will only indicate the part that is different. Let us fix an element $x\in A$. Note that $x$ cannot be $0$. Indeed, if $x=0$, it would follow that
\begin{equation*}
  B = \{0\}+B \subset A+B \subset A,
\end{equation*}
so $B\subset A$, which is impossible because $A\cap B=\varnothing$. Since $A$ is pointwise $\alpha$-lineable, there is a vector space $W\subset A\cup\{0\}$ such that $\dim(W)=\alpha$ and $x\in W$. Let \,$\left\{e_{\gamma}:\gamma\in\Gamma\right\}$ \,be chosen so that \,$\{x\}\cup\left\{e_{\gamma} : \gamma\in\Gamma\right\}$ is a basis of $W$.

\vskip 3pt

The cardinal number $\alpha$ is infinite, so $\alpha^2=\alpha$ and then it is possible to split \,$\Gamma$ \,into $\alpha$ subsets, each one with cardinality \,$\alpha$. That is, $\Gamma=\bigcup_{k\in\mathcal{K}}\Gamma_k$, where the cardinality of the index set \,$\mathcal{K}$ \,is \,$\alpha$, the cardinality of each \,$\Gamma_k$ is $\alpha$, and $\Gamma_k\cap \Gamma_l = \varnothing$ if $k\neq l$. For each \,$k\in\mathcal{K}$, we define
\begin{equation*}
  W_k = {\rm span} \left(\{x\} \cup \left\{e_{\gamma}:\gamma\in\Gamma_k\right\}\right).
\end{equation*}
Note that $x\in W_k$, $\dim(W_k)=\alpha$, and $W_k\cap W_l={\rm span}\{x\}$ if $k\neq l$. In addition, we have
\begin{equation*}
  W_k + W_l \subset W \subset A\cup\{0\}.
\end{equation*}

The basis of each $W_k$ can be renamed in the following way:
\begin{equation*}
  \{x\} \cup \left\{e_{\gamma}:\gamma\in\Gamma_k\right\} = \left\{\omega_i^{(k)}:i\in I\right\},
\end{equation*}
where $I$ is a index set with cardinality $\alpha$ such that $1\in I$ and we assume that $\omega_1^{(k)}=x$. Now we can split $I$ into infinitely many pairwise disjoint nonempty sets: $I=\bigcup_{n\in\N}I_n$, where $1\in I_1$ and $I_n\cap I_m=\varnothing$ if $n\neq m$. From this point, the proof continues exactly as in \cite[Theorem 2.3]{CalderonGerlachPrado}.
\end{proof}

The following assertion is an improvement of Theorem \ref{max-dens-lineable}.

\begin{theorem}
Let \,$\left\{P_n(z)=\displaystyle{\sum_{j=m(n)}^{d(n)}c_{j,n} z^j}\right\}_{n\geq 1}$ \,be a sequence of nonconstant polynomials fulfilling the following conditions:
\begin{enumerate}
  \item[(a)] The sequence \,$(m(n))$ \,of valences is unbounded.
  \item[(b)] At least one of the properties \,{\rm (P), (Q) or (R)} \,is satisfied.
\end{enumerate}
Then the set \,$HC((P_n(D)))$ is $\mathfrak{c}$-infinitely pointwise $\mathfrak{c}$-dense-lineable in \,$H(\C)$.
\end{theorem}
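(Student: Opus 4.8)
The plan is to derive this statement as a clean corollary of the pointwise machinery already in place, namely by invoking Theorem \ref{A stronger than B. Pointwise} with the infinite cardinal $\alpha=\mathfrak{c}$ and the concrete choices $X=H(\C)$, $A=HC((P_n(D)))$, and $B$ equal to the set of all polynomials on $\C$. The space $H(\C)$ is metrizable and separable, so the ambient hypothesis of Theorem \ref{A stronger than B. Pointwise} is met, and the task reduces to checking the four conditions (i)--(iv), most of which have already been verified while proving Theorems \ref{max-dens-lineable} and \ref{HC((P_n(D))) es pointwise spaceable}.

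First I would transcribe conditions (i), (ii) and (iv) exactly as in the proof of Theorem \ref{max-dens-lineable}. For (iv), the polynomials form a dense subspace of $H(\C)$, so $B$ is dense-lineable. For (ii), given any polynomial $g$, condition (a) forces $m(n)>\mathrm{degree}(g)$ for all large $n$, whence $P_n(D)g=0$ eventually; thus no polynomial is hypercyclic and $A\cap B=\varnothing$. For (i), if $f\in A$ and $g\in B$, then along a subsequence realizing convergence of $P_{n_k}(D)f$ to a prescribed target the same limit persists after adding $g$, since $P_n(D)g\to 0$; hence $f+g\in A$, i.e. $A+B\subset A$.

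The only genuinely new ingredient is condition (iii), that $A=HC((P_n(D)))$ be pointwise $\mathfrak{c}$-lineable. This I would read off directly from Theorem \ref{HC((P_n(D))) es pointwise spaceable}, which under exactly the present hypotheses (a)--(b) supplies pointwise $\mathfrak{c}$-spaceability of $HC((P_n(D)))$; since every closed subspace is in particular a subspace, pointwise $\mathfrak{c}$-spaceability is formally stronger than the pointwise $\mathfrak{c}$-lineability demanded by (iii). With all four conditions in hand, Theorem \ref{A stronger than B. Pointwise} immediately yields that $A$ is $\mathfrak{c}$-infinitely pointwise $\mathfrak{c}$-dense-lineable, which is the assertion.

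I do not expect a serious obstacle here: the statement is essentially the pointwise refinement of Theorem \ref{max-dens-lineable}, obtained by feeding the upgraded hypothesis (iii) into the upgraded conclusion. The only point requiring a moment's care is the cardinal bookkeeping --- aligning $\alpha=\mathfrak{c}$ in Theorem \ref{A stronger than B. Pointwise} with the $\mathfrak{c}$-spaceability of Theorem \ref{HC((P_n(D))) es pointwise spaceable}, and recalling from \cite{Popoola} that $\dim H(\C)=\mathfrak{c}$, so that the value $\mathfrak{c}$ is in fact the maximal admissible dimension and the result is sharp.
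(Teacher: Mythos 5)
Your proposal is correct and follows exactly the paper's own route: apply Theorem \ref{A stronger than B. Pointwise} with $X=H(\C)$, $A=HC((P_n(D)))$, $B$ the polynomials and $\alpha=\mathfrak{c}$, importing (i), (ii), (iv) from the proof of Theorem \ref{max-dens-lineable} and obtaining (iii) from the pointwise $\mathfrak{c}$-spaceability of Theorem \ref{HC((P_n(D))) es pointwise spaceable}. Nothing is missing; the observation that spaceability formally implies the lineability required in (iii) is exactly the reduction the paper uses implicitly.
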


\begin{proof}
It is a consequence of Theorem \ref{A stronger than B. Pointwise} with $A=HC((P_n(D)))$ and $B$ the set of all polynomials on $\C$. It was shown in the proof of Theorem \ref{max-dens-lineable} that $A+B\subset A$ and $A\cap B=\varnothing$. Moreover, $A$ is pointwise $\mathfrak{c}$-lineable by Theorem \ref{HC((P_n(D))) es pointwise spaceable}, while $B$ is dense-lineable because the polynomials form a dense subspace of \,$H(\C)$.
\end{proof}

\vskip 3pt

We want to finish this section by posing the following question, which arises naturally.

\vskip 3pt

\noindent {\bf Question.} Under similar assumptions to those of Theorems \ref{main theorem spaceable} and \ref{max-dens-lineable} for a sequence \,$(\Phi_n)$ of entire functions with exponential type, is the set \,$HC((\Phi_n(D)))$ \,spaceable, maximal dense-lineable, and even infinitely pointwise $\mathfrak{c}$-dense-lineable and pointwise spaceable, in $H(\C)$?

\vskip 3pt

\noindent {\bf Acknowledgments.} This paper has been made during a stay of the third author at the Instituto de Matem\'aticas de la Universidad de Sevilla (IMUS). The third author wants to thank this institution for its support and hospitality.

\vskip 3pt

\noindent {\bf Authors contribution.} All four authors have contributed equally to this work.

\vskip 3pt

\end{document}